\documentclass{amsart}[12pt]
\usepackage{amsmath, amsthm, amscd, amsfonts,}
\usepackage{graphicx,float}

\usepackage{amssymb}

\usepackage{pb-diagram}
%\usepackage{lamsarrow}
%\usepackage{pb-lams}

%
% Counters as I like'em
%

%\newcommand{\GG}{{\mathfrak{G}}}

%\newcommand{\PP}{{\mathfrak{P}}}
\newcommand{\PP}{{\mathbb{P}}}

\newcommand{\url}[1]{{\tt #1}}

\DeclareMathOperator{\range}{range}

\DeclareMathOperator{\len}{lh}

\DeclareMathOperator{\dom}{dom}

\DeclareMathOperator{\Col}{Col}
\DeclareMathOperator{\Add}{Add}

\DeclareMathOperator{\Suc}{Suc}
\DeclareMathOperator{\Lev}{Lev}

\def\MPB{{\mathbb{P}}}
\def\MQB{{\mathbb{Q}}}

\setlength{\textheight}{22cm} \setlength{\textwidth}{14cm}
\setlength{\oddsidemargin}{1cm} \setlength{\evensidemargin}{1cm}
%\itshape%SHK 24/2

\newtheorem{theorem}{Theorem}[section]
\newtheorem{lemma}[theorem]{Lemma}

 %added
%\theoremstyle{definition}
\newtheorem{definition}[theorem]{Definition}
\newtheorem{open Question}[theorem]{Open Question}

\newtheorem{notation}[theorem]{Notation}
\newtheorem{claim}[theorem]{Claim}
\numberwithin{equation}{section}

\def\MPB{{\mathbb{P}}}
\def\MQB{{\mathbb{Q}}}

\def\rmark{\mbox{$\rm\bf\rule{0.06em}{1.45ex}\kern-0.05em R$}}
\def\pmark{\mbox{$\rm\bf\rule{0.06em}{1.45ex}\kern-0.05em P$}}
\def\nmark{\mbox{$\rm\bf\rule{0.06em}{1.45ex}\kern-0.05em N$}}
\def\vdash{\mbox{$\rm\| \kern-0.13em -$}}

\usepackage{pb-diagram}

\def\rmark{\mbox{$\rm\bf\rule{0.06em}{1.45ex}\kern-0.05em R$}}
\def\pmark{\mbox{$\rm\bf\rule{0.06em}{1.45ex}\kern-0.05em P$}}
\def\nmark{\mbox{$\rm\bf\rule{0.06em}{1.45ex}\kern-0.05em N$}}
\def\vdash{\mbox{$\rm\| \kern-0.13em -$}}

%%%%%%%%%%%%%

\begin{document}

\title[Strongly compct diagonal Prikry forcing]{Strongly compct diagonal Prikry forcing}

\author[M. Golshani ]{Mohammad Golshani}

\thanks{The first author's research was in part supported by a grant from IPM (No. 98030417).}
\maketitle

\begin{abstract}
We define a version of Gitik-Sharon diagonal Prikry forcing using a strongly compact cardinal, and prove its basic properties.
\end{abstract}
\maketitle

\section{Introduction}
In \cite{gitik-sharon}, Gitik and Sharon introduced a new forcing notion,  diagonal (supercompact) Prikry forcing, to answer some questions of Cummings, Foreman, Magidor and Woodin. So starting from a supercompact cardinal $\kappa$, they introduced a generic extension in which the following hold:
\begin{enumerate}
\item $\kappa$ is a singular limit cardinal of cofinality $\omega$ and $2^\kappa > \kappa^+,$
\item There exists a very good scale at $\kappa,$
\item There is a bad scale at $\kappa.$
\end{enumerate}
In this paper we define a strongly compact version
of Gitik-Sharon forcing that we call \emph{strongly compact diagonal Prikry forcing}, prove its basic properties and show that it shares  all properties of diagonal Prikry forcing.

\section{Strongly compact diagonal Prikry forcing}
In this section we define our \emph{strongly compact diagonal Prikry forcing}. Assume $\kappa$
is a strongly compact cardinal, and let
\[
\kappa=\kappa_0 < \kappa_1 < \dots < \kappa_n < \dots
\]
be an increasing sequence of regular cardinals with limit $\kappa_\omega.$ Let $U$ be a fine measure on $P_{\kappa}(\kappa_\omega^+),$
and for each $n< \omega$ let $U_n$ be its projection to $P_{\kappa}(\kappa_n):$
\[
X \in U_n \Leftrightarrow X \subseteq P_{\kappa}(\kappa_n) \wedge \{ P \in P_{\kappa}(\kappa_\omega^+): P \cap \kappa_n \in X   \} \in U.
\]
Let
\begin{center}
$K_n=\{ P \in P_{\kappa}(\kappa_n): P \cap \kappa$ is inaccessible $\}$.
\end{center}
Then $K_n \in U_n.$
Corresponding to the sequences $\bar{\kappa}=\langle \kappa_0, \dots, \kappa_n, \dots        \rangle$ and $\bar{U}= \langle U_0, \dots, U_n, \dots    \rangle$ we define the forcing notion
$\MPB=\MPB_{\bar{\kappa}, \bar{U}}$ as follows.
\begin{definition}
A condition in $\MPB$ is a finite sequence
\[
p= \langle  P_0, \dots, P_{n-1}, T    \rangle
\]
where:
\begin{enumerate}
\item For $i < n, P_i \in K_i,$

\item $P_0 \prec P_1 \prec \dots \prec P_{n-1},$ where
\[
P \prec Q \Leftrightarrow otp(P)=\lambda_P < \kappa_Q= Q \cap \kappa,
\]
\item $T$ is a $\bar{U}$-tree with trunk $\langle P_0, \dots, P_{n-1}         \rangle,$ which means:
\begin{enumerate}
\item $T$ is a tree, whose nodes are finite sequences $\langle  Q_0, \dots, Q_{m-1} \rangle$, such that each $Q_i \in K_i$ and
$Q_0 \prec Q_1 \prec \dots \prec Q_{m-1},$ ordered by end extension,

\item The trunk of $T$ is $t=\langle P_0, \dots, P_{n-1}         \rangle,$ which means $t \in T$ and for any $s \in T, s \unlhd t$ or $t \unlhd s,$

\item If $s=\langle  Q_0, \dots, Q_{m-1} \rangle \unrhd t,$ then
\[
\Suc_T(s)=\{ Q \in K_{m}: s^{\frown} \langle Q  \rangle \in T          \} \in U_{m}.
\]
\end{enumerate}
\end{enumerate}
\end{definition}
Given a condition $p \in \MPB,$ we denote it by
\[
p=\langle P_0^p, \dots, P_{\len(p)-1}^p, T^p             \rangle
\]
and call $\len(p)$ the length of $p$. We allow $\len(p)=0,$ which just means $p$ has no $P$'s in its definition. We also call $\langle P_0^p, \dots, P_{\len(p)-1}^p          \rangle$ the lower part of $p$.
\begin{definition}
Let $T$ be a tree as above and $ s \in T.$ Then
\[
T_s = \{ u\in T: u \unlhd s \text{~or~} s \unlhd u           \}.
\]
\end{definition}
\begin{definition}
Let $p, q \in \MPB.$ Then $p \leq q$ iff
\begin{enumerate}
\item $\len(p) \geq \len(q),$

\item For all $i < \len(q), P_i^p=P_i^q,$

\item For all $\len(q) \leq i < \len(p), P_i^p \in \Suc_{T^q}(\langle P_0^p, \dots P_{i-1}^p     \rangle)$,

\item $T^p \subseteq T^q_{\langle P_0^p, \dots, P_{\len(p)-1}^p     \rangle}$.
\end{enumerate}
\end{definition}
\begin{definition}
Let $p, q \in \MPB.$ We say $p$ is a Prikry or a direct extension of $q, p \leq^* q,$ iff $p \leq q$ and $\len(p)=\len(q).$
\end{definition}
Before we continue, let us introduce a notation that will become useful later.
\begin{notation}
Let $\Xi$ be the tree of possible lower parts:
\begin{center}
$\Xi = \{\langle P_0, \dots, P_{n-1}   \rangle: n< \omega, P_i \in K_i, P_0 \prec \dots P_{n-1}\}$.
\end{center}
Also we denote each $t \in \Xi$ as $t= \langle  P_0^t, \dots P_{\len(t)-1}^t      \rangle$.
\end{notation}

We now study the basic properties of the forcing notion $(\MPB, \leq, \leq^*)$.
\begin{lemma}
$(\MPB, \leq)$ satisfies the $\kappa_\omega^+$-c.c.
\end{lemma}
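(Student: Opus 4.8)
The standard way to prove the $\kappa_\omega^+$-c.c.\ for a Prikry-type forcing of this shape is to show that two conditions with the \emph{same lower part} are compatible, and then to count lower parts. First I would observe that the number of possible lower parts is small: each lower part is a finite sequence $\langle P_0,\dots,P_{n-1}\rangle$ with $P_i \in K_i \subseteq P_\kappa(\kappa_i)$, so the set $\Xi$ of all lower parts has size at most $\sum_{n<\omega}\prod_{i<n}|P_\kappa(\kappa_i)| \le \sum_{n<\omega}\kappa_n^{<\kappa} \le \kappa_\omega$ (using that each $\kappa_i<\kappa_\omega$ and that we may arrange, or simply note for the c.c.\ argument, $\kappa_i^{<\kappa}<\kappa_\omega$; in any case $|\Xi|\le\kappa_\omega<\kappa_\omega^+$). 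So $\Xi$ has size $\le \kappa_\omega$.

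Next, the key step: given $p,q \in \MPB$ with the same lower part $t=\langle P_0,\dots,P_{n-1}\rangle$, I claim $p$ and $q$ are compatible. Define $r = \langle P_0,\dots,P_{n-1}, T^p \cap T^q\rangle$. I must check $T^p \cap T^q$ is a $\bar U$-tree with trunk $t$. It is clearly a tree ordered by end-extension whose nodes are $\prec$-increasing sequences with $Q_i \in K_i$, and $t$ lies in it since $t \in T^p$ and $t \in T^q$ (both have trunk $t$), and every node of either tree is $\unlhd$ or $\unrhd t$, so the same holds in the intersection. The only real content is the measure-one splitting condition: for $s \unrhd t$ in $T^p\cap T^q$,
\[
\Suc_{T^p\cap T^q}(s) = \Suc_{T^p}(s)\cap\Suc_{T^q}(s),
\]
which is the intersection of two sets in $U_{\len(s)}$, hence in $U_{\len(s)}$ since $U_{\len(s)}$ is a (fine, hence $\kappa$-complete, in particular) filter. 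Thus $r \in \MPB$, and $r \le^* p$ and $r \le^* q$ directly from the definition of $\le$ (same lower part, and $T^r \subseteq T^p_t$, $T^r\subseteq T^q_t$). So any antichain meets each "column" $\{p : \text{lower part of }p = t\}$ in at most one element, giving $|\text{antichain}| \le |\Xi| \le \kappa_\omega < \kappa_\omega^+$.

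**Main obstacle.** There is essentially no deep obstacle here; the only point requiring a little care is the bound $|\Xi| \le \kappa_\omega$, which needs $|P_\kappa(\kappa_i)| = \kappa_i^{<\kappa} < \kappa_\omega$ for each $i$. If the $\kappa_i$ are not assumed to satisfy this, one should instead argue that $2^{\kappa_i} \le$ something below $\kappa_\omega^+$, or simply that $|P_\kappa(\kappa_i)| \le 2^{\kappa_i} \le \kappa_\omega^+$ is not good enough—so one really does want, and I would state as part of the setup, that each $\kappa_i$ is chosen with $\kappa_i^{<\kappa} = \kappa_i$ (possible since $\kappa$ is inaccessible and the $\kappa_i$ are regular $\ge\kappa$), whence $|\Xi| = \kappa_\omega$. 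Granting that, the proof is the two displayed facts above: compatibility of equal-lower-part conditions via intersecting trees, and counting lower parts.
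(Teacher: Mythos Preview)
Your approach is essentially identical to the paper's: the paper's proof consists of exactly the two facts you isolate, namely that conditions with the same lower part are compatible (by intersecting their trees) and that the set of lower parts has size at most $\kappa_\omega$. Your elaboration of the tree-intersection argument and your flag about needing $\kappa_i^{<\kappa}\le\kappa_\omega$ are both fair (the paper simply asserts the cardinality bound without comment), but there is no difference in strategy.
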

\begin{proof}
This follows easily using the fact that if $p$ and $q$ have the same lower part, then they are compatible, and that
\[
|\{\langle P_0^p, \dots, P_{\len(p)-1}^p \rangle : p\in \MPB    \}| \leq\kappa_\omega.
\]
\end{proof}
\begin{lemma}
$(\MPB, \leq^*)$ is $\kappa$-closed.
\end{lemma}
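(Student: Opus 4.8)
**Proof plan for: $(\MPB, \leq^*)$ is $\kappa$-closed.**

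The plan is to take a decreasing sequence $\langle p_\alpha : \alpha < \delta \rangle$ of conditions under $\leq^*$ with $\delta < \kappa$, and construct a lower bound $p$. Because all the $p_\alpha$ are direct extensions of one another, clause (4) of the definition of $\leq^*$ forces $\len(p_\alpha) = \len(p_0) =: n$ for every $\alpha$, and clause (2) forces the lower parts to be literally equal: $P_i^{p_\alpha} = P_i^{p_0}$ for all $i < n$ and all $\alpha$. So the only thing that genuinely varies along the sequence is the tree $T^{p_\alpha}$, and these are $\subseteq$-decreasing (clause (4) of $\leq$, noting the lower part is fixed). Thus the natural candidate lower bound is
\[
p = \langle P_0^{p_0}, \dots, P_{n-1}^{p_0}, T \rangle, \qquad T = \bigcap_{\alpha < \delta} T^{p_{\alpha}}.
\]

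First I would check that $T$ is still a $\bar U$-tree with trunk $t = \langle P_0^{p_0}, \dots, P_{n-1}^{p_0}\rangle$. It is clearly a tree of the right form (a subset of $\Xi$, closed under initial segments since each $T^{p_\alpha}$ is), and $t \in T$ since $t$ lies in every $T^{p_\alpha}$, and every node of every $T^{p_\alpha}$ is $\unlhd$- or $\unrhd$-comparable with $t$, so the same holds in $T$. The point that needs the hypothesis $\delta < \kappa$ is the measure-one requirement on successor sets: for $s \in T$ with $s \unrhd t$, I claim
\[
\Suc_{T}(s) = \bigcap_{\alpha < \delta} \Suc_{T^{p_\alpha}}(s).
\]
The inclusion $\subseteq$ is immediate; for $\supseteq$, if $Q$ lies in every $\Suc_{T^{p_\alpha}}(s)$ then $s^{\frown}\langle Q\rangle \in T^{p_\alpha}$ for all $\alpha$, hence $s^{\frown}\langle Q\rangle \in T$, hence $Q \in \Suc_T(s)$. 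Each $\Suc_{T^{p_\alpha}}(s)$ (for those $\alpha$ with $s \in T^{p_\alpha}$; and $s \in T$ means $s$ is in all of them) is in $U_m$, where $m = \len(s)$, and $U_m$ is a $\kappa$-complete ultrafilter because it is a projection of the fine, hence $\kappa$-complete, measure $U$ on $P_\kappa(\kappa_\omega^+)$. Since $\delta < \kappa$, the intersection of these $\delta$-many $U_m$-large sets is again in $U_m$. This gives clause (3)(c), and with it $T$ is a legitimate $\bar U$-tree, so $p \in \MPB$.

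Finally I would verify $p \leq^* p_\alpha$ for each $\alpha$: the lengths agree ($\len(p) = n = \len(p_\alpha)$), the lower parts agree by construction, clause (3) of $\leq$ is vacuous since the lengths are equal, and $T = \bigcap_\beta T^{p_\beta} \subseteq T^{p_\alpha} = T^{p_\alpha}_t$ (the last equality because $t$ is the trunk of $T^{p_\alpha}$). Hence $p \leq p_\alpha$ and $\len(p) = \len(p_\alpha)$, so $p \leq^* p_\alpha$. The only real content is the $\kappa$-completeness step, and the only mild subtlety to keep straight is that the projected measures $U_m$ inherit $\kappa$-completeness from $U$ — everything else is bookkeeping about trunks and end-extension.
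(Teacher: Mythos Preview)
Your proposal is correct and is exactly the argument the paper has in mind: the paper's own proof is the single line ``By the $\kappa$-completeness of $U_n$'s,'' and your write-up is the fully unpacked version of that line, with the intersection of the trees as the lower bound and the $\kappa$-completeness of each $U_m$ supplying the only nontrivial step.
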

\begin{proof}
By the $\kappa$-completeness of $U_n$'s.
\end{proof}
We now show that $(\MPB, \leq, \leq^*)$ is a Prikry type forcing notion.
\begin{lemma}
$(\MPB, \leq, \leq^*)$ satisfies the Prikry property.
\end{lemma}
\begin{proof}
Let $p \in \MPB$ and let $\sigma$ be a statement of the forcing language $(\MPB, \leq).$ We find $q \leq^* p$
which decides $\sigma.$ Assume this is not true.

Call a lower part $t=\langle P_0, \dots, P_{n-1}    \rangle$
indecisive if there is no tree $T$ with trunk $t$ such that $p= \langle P_0, \dots, P_{n-1}, T  \rangle \in \MPB$
and $p$ decides $\sigma.$ Otherwise $t$ is called decisive. Note that by our assumption
the lower part of $p$ is indecisive.
\begin{claim}
If $t=\langle P_0, \dots, P_{n-1}    \rangle$ is indecisive, then
\[
\{ P \in K_{n}: t^{\frown} \langle P \rangle \text{~is indecisive}\} \in U_{n}.
\]
\end{claim}
\begin{proof}
Assume otherwise, so
\[
X=\{ P \in K_{n}: t^{\frown} \langle P \rangle \text{~is decisive}\} \in U_{n}.
\]
For $P \in X$ pick a tree $T_P$ and $i<2$ such that $q_P= \langle t^{\frown} \langle P \rangle, T_P \rangle \in \MPB$
and $q_P \Vdash~ ^{i}\sigma$ (where $^{0}\sigma=\sigma$ and $^{1}\sigma=\neg \sigma$).
Let $i<2$ be such that
\[
Y=\{ P \in X: q_P  \Vdash~ ^{i}\sigma \} \in U_{n}.
\]
Let $T$ be a tree with trunk $t$, so that $\Suc_{T}(s)=Y,$ and for each $P \in Y, T_{\langle   t^{\frown} \langle P \rangle   \rangle}=T_P.$
Let $p= \langle t, T \rangle$. Then $p \in \MPB,$ and any extension of $p$ extends some $q_P, P \in Y.$ It follows that $p\Vdash ^{i}\sigma,$
hence $t$ is decisive, a contradiction.
\end{proof}
By the above claim and by induction, we can find a tree $T$ with trunk $\langle    P_0^p, \dots, P_{\len(p)-1}^p      \rangle$
such that all nodes $t \in T, t \unrhd \langle    P_0^p, \dots, P_{\len(p)-1}^p      \rangle$
are indecisive. Let $q= \langle     P_0^p, \dots, P_{\len(p)-1}^p, T   \rangle$.
Let $r \leq q$ and $r$ decides $\sigma.$ Then $\langle P_0^r, \dots, P_{\len(r)-1}^r    \rangle \in T$
and it is decisive, a contradiction. The lemma follows.
\end{proof}
Let $G$ be $\MPB$-generic over $V$, and let
$\langle  P_i: i<\omega     \rangle$
be the Prikry sequence added by $G$, where
$P_i=P_i^p,$ for some (and hence all) $p\in G$ wit $\len(p) > i.$ Then
\[
P_0 \prec P_1 \prec \dots \prec P_i \prec \dots.
\]
\begin{lemma}
For any $n \leq \omega,$
\[
\kappa_n = \bigcup \{P_i \cap \kappa_n: i< \omega \},
\]
in particular all cardinals in $(\kappa, \kappa_\omega)$ are collapsed into $\kappa.$
\end{lemma}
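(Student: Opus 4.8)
The plan is to establish the displayed equation by a density argument exploiting the fineness of $U$, and then to read off the collapsing assertion as an immediate consequence. One inclusion is trivial: each $P_i\in P_{\kappa}(\kappa_i)$, so $P_i\cap\kappa_n\subseteq\kappa_n$ for every $n\le\omega$, whence $\bigcup_{i<\omega}(P_i\cap\kappa_n)\subseteq\kappa_n$. For the reverse inclusion it suffices to show that every ordinal $\alpha<\kappa_\omega$ lies in $P_i$ for some $i<\omega$: if $\alpha<\kappa_n$ with $n<\omega$ this puts $\alpha$ in $P_i\cap\kappa_n$, and the case $n=\omega$ follows since every $\alpha<\kappa_\omega$ is below some $\kappa_m$.

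So fix $\alpha<\kappa_\omega$. The key observation is that for every $m<\omega$ with $\kappa_m>\alpha$, the set $A^m_\alpha=\{P\in K_m:\alpha\in P\}$ belongs to $U_m$. Indeed $A^m_\alpha=K_m\cap\{P\in P_{\kappa}(\kappa_m):\alpha\in P\}$, and $K_m\in U_m$ was already noted; moreover, unravelling the definition of the projection $U_m$, the set $\{P\in P_{\kappa}(\kappa_m):\alpha\in P\}$ belongs to $U_m$ precisely because $\{P\in P_{\kappa}(\kappa_\omega^+):\alpha\in P\cap\kappa_m\}=\{P\in P_{\kappa}(\kappa_\omega^+):\alpha\in P\}\in U$ by fineness of $U$ (here using $\alpha<\kappa_m$).

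Next I would show that $D_\alpha=\{q\in\MPB:\alpha\in P^q_i\text{ for some }i<\len(q)\}$ is dense. Given $p\in\MPB$, choose $m\ge\len(p)$ with $\kappa_m>\alpha$. Since all sets $\Suc_{T^p}(\cdot)$ are $U$-large and hence nonempty, there is a node $s=\langle P^p_0,\dots,P^p_{\len(p)-1},Q_{\len(p)},\dots,Q_{m-1}\rangle\in T^p$ of length $m$ extending the trunk; as $\Suc_{T^p}(s)\in U_m$ and $A^m_\alpha\in U_m$, pick $P_m\in\Suc_{T^p}(s)\cap A^m_\alpha$ and set $q=\langle P^p_0,\dots,P^p_{\len(p)-1},Q_{\len(p)},\dots,Q_{m-1},P_m,\,T^p_{s^{\frown}\langle P_m\rangle}\rangle$. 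All constraints in the definition of $\MPB$ (membership in the $K_i$, the $\prec$-chain, and being a $\bar U$-tree with the right trunk) are inherited from $T^p$ and from $P_m\in\Suc_{T^p}(s)\cap A^m_\alpha\subseteq K_m$, so $q\in\MPB$, $q\le p$, and $\alpha\in P^q_m$ with $m<\len(q)$. Hence $D_\alpha$ is dense, $G$ meets it, and since $P_i=P^q_i$ for any $q\in G$ with $\len(q)>i$, we get $\alpha\in P_i$ for some $i$, which completes the proof of the displayed equation.

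For the final clause: since $P_i\in P_{\kappa}(\kappa_i)$ we have $\lvert P_i\rvert<\kappa$, so for each $n$ with $\kappa<\kappa_n\le\kappa_\omega$ the ordinal $\kappa_n=\bigcup_{i<\omega}(P_i\cap\kappa_n)$ is, in $V[G]$, a countable union of sets of size $<\kappa$, whence $\lvert\kappa_n\rvert^{V[G]}\le\kappa$; thus every $V$-cardinal in the interval $(\kappa,\kappa_\omega)$ is collapsed to have size at most $\kappa$ in $V[G]$ (in fact to exactly $\kappa$, since $\kappa$ is not collapsed: the Prikry property together with the $\kappa$-closure of $\le^{*}$ shows $\MPB$ adds no bounded subset of $\kappa$). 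The only step that is not pure bookkeeping is the fineness computation showing $A^m_\alpha\in U_m$; everything else — in particular the verification that the condition $q$ built in the density argument is legitimate and extends $p$ — is immediate from the definitions.
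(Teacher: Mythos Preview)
The paper states this lemma without proof, so there is nothing to compare against line by line. Your argument is correct and is exactly the expected one: fineness of $U$ gives $\{P\in P_\kappa(\kappa_m):\alpha\in P\}\in U_m$ for $\alpha<\kappa_m$, the resulting density of $D_\alpha$ forces every $\alpha<\kappa_\omega$ into some $P_i$, and the collapse of $(\kappa,\kappa_\omega)$ then follows from $|P_i|<\kappa$.
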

Let us summarize the properties of forcing notion $\MPB.$
\begin{theorem}
Let $G$ be $\MPB$-generic over $V$. Then

$(a)$ $cf^{V[G]}(\kappa)=\omega,$

$(b)$ $\kappa^{+V[G]}=\kappa_\omega^+,$

$(c)$ No bounded subsets of $\kappa$ are added, in particular all cardinals $\leq \kappa$ are preserved.
\end{theorem}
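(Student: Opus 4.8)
The plan is to derive each of the three clauses from the lemmas already established, together with the standard analysis of the Prikry sequence. For $(a)$: the generic $G$ yields the sequence $\langle P_i : i<\omega\rangle$, and since each $P_i \in K_i$ we have $P_i \cap \kappa$ inaccessible, and by clause (2) in the definition of the ordering $P_i \prec P_{i+1}$ means $\mathrm{otp}(P_i) < \kappa_{P_{i+1}} = P_{i+1}\cap\kappa$. First I would argue that $\langle P_i\cap\kappa : i<\omega\rangle$ is cofinal in $\kappa$: given any $\alpha<\kappa$, the set of conditions $p$ whose lower part contains some $P_i$ with $P_i\cap\kappa > \alpha$ is dense (one can always extend the trunk by picking, from the relevant $U_m$-large successor set, an element whose trace on $\kappa$ exceeds $\alpha$, using that $U_m$ concentrates on $K_m$ and a fineness/largeness argument shows most $P$ have $P\cap\kappa$ arbitrarily large). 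Hence in $V[G]$, $\kappa$ is the increasing union of the $\omega$-sequence $\langle P_i\cap\kappa : i<\omega\rangle$, so $\mathrm{cf}^{V[G]}(\kappa)=\omega$. Note $\kappa$ itself is not collapsed by clause $(c)$, so $\omega$ is the genuine cofinality.

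For $(c)$: I would invoke the standard Mathias-style argument for Prikry-type forcings. By the Prikry property (the preceding Lemma) together with $\kappa$-closure of $(\MPB,\leq^*)$ (the Lemma that $(\MPB,\leq^*)$ is $\kappa$-closed), one shows $(\MPB,\leq,\leq^*)$ adds no bounded subsets of $\kappa$: given a name $\dot{x}$ for a subset of some $\delta<\kappa$ and a condition $p$, use the Prikry property to decide ``$\check\alpha\in\dot x$'' for each $\alpha<\delta$ by a direct extension, and since $\delta<\kappa$ and $\leq^*$ is $\kappa$-closed, diagonalize through all $\delta$-many questions to get a single $q\leq^* p$ deciding $\dot x\restriction\delta$ completely; hence $\dot x^G \in V$. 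It follows immediately that every cardinal $\le\kappa$ (and every cardinal that is a limit of smaller cardinals, in particular $\kappa$ when regular in $V$) is preserved, and that $\kappa$ remains a cardinal in $V[G]$.

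For $(b)$: combine the $\kappa_\omega^+$-c.c.\ (the first Lemma) with the collapsing Lemma. The c.c.\ shows all cardinals $\ge\kappa_\omega^+$ are preserved, so $\kappa_\omega^+$ remains a cardinal in $V[G]$. The collapsing Lemma (for $n=\omega$) gives that every cardinal in the open interval $(\kappa,\kappa_\omega)$, and also $\kappa_\omega$ itself, is collapsed to have cardinality $\le\kappa$ — indeed $\kappa_\omega = \bigcup_i (P_i\cap\kappa_\omega)$ where each $P_i\cap\kappa_\omega$ has size $<\kappa$ in $V[G]$ (by $(c)$, $|P_i\cap\kappa_\omega|^{V[G]} = |P_i\cap\kappa_\omega|^{V} = \mathrm{otp}(P_i) < \kappa$, using that $P_i\in P_\kappa(\kappa_\omega)$ so $P_i$ has size $<\kappa$ in $V$). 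Together with $(c)$ preserving $\kappa$, this makes $\kappa_\omega^+$ the successor of $\kappa$ in $V[G]$, i.e.\ $\kappa^{+V[G]} = \kappa_\omega^+$.

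The main obstacle I anticipate is the no-bounded-subsets argument in $(c)$: one must be careful that the diagonalization across the $\delta$-many decisions genuinely stays within the direct-extension order and that $\kappa$-closure applies to the decreasing $\leq^*$-sequence of length $\delta<\kappa$ one builds, and that nonetheless the final condition still decides each membership statement — this is the classical but delicate combination of the Prikry property with completeness of the direct extension ordering, and it is what propels all the cardinal-preservation conclusions. Everything else is bookkeeping with the chain condition and the explicit description of the generic sequence.
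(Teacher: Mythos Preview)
Your proposal is correct and follows exactly the approach the paper intends: the theorem is presented there merely as a summary (``Let us summarize the properties of forcing notion $\MPB$'') with no proof given, since each clause is meant to follow from the preceding lemmas (the $\kappa_\omega^+$-c.c., $\kappa$-closure of $\leq^*$, the Prikry property, and the collapsing lemma). You have supplied precisely the standard arguments the paper leaves to the reader.
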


\section{More on strongly compact diagonal Prikry forcing}
In this section we prove some more properties of the forcing notion $\MPB$ introduced in the previous section.
Let $G$ be $\MPB$-generic over $V$, and let $\langle P_i: i<\omega   \rangle$
be the corresponding Prikry generic sequence. It is easily seen that
\[
G= \{p \in \MPB: \langle P_0^p, \dots, P_{\len(p)-1}^p=  \langle P_0, \dots, P_{\len(p)-1}  \rangle \text{~and~} \forall i \geq \len(p), P_i \in \Suc_{T^p}(\langle  P_0, \dots, P_{i-1} \rangle)       \},
\]
hence $V[G]=V[\langle P_i: i<\omega   \rangle]$.
\begin{lemma}
(Diagonal intersection lemma)
For each $t \in \Xi$, let $T^t$ be a $\bar{U}$-tree with trunk $t$ such that $\langle  t, T^t \rangle \in \MPB.$ Then there is a $\bar{U}$-tree $S$ with trunk $\langle \rangle,$
so that for each $t\in S$, $\langle t, S_t\rangle \leq \langle t, T^t \rangle$.
\end{lemma}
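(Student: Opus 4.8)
The plan is to build $S$ by a "diagonal intersection" over the tree $\Xi$ of possible lower parts, mimicking the standard Prikry-style argument but indexed by finite sequences rather than ordinals. First I would fix, for each $t \in \Xi$, the given tree $T^t$ with trunk $t$. The naive idea would be to let a node $s = \langle Q_0, \dots, Q_{m-1}\rangle$ belong to $S$ iff $s$ is a legal lower part and, for every initial segment $t = \langle Q_0, \dots, Q_{j-1}\rangle$ of $s$ with $j \le m$, we have $s \in T^t$ (i.e. the part of $s$ past $t$ follows $T^t$). The trunk is then $\langle \rangle$, and for $t \in S$ the restriction $S_t$ is, by construction, contained in $T^t_t = T^t$, so $\langle t, S_t \rangle \le \langle t, T^t\rangle$ as required — provided $S$ is actually a $\bar U$-tree, i.e. that $\Suc_S(s) \in U_m$ for every $s \in S$ of length $m$.

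The key step is verifying $\Suc_S(s) \in U_m$. Given $s = \langle Q_0, \dots, Q_{m-1}\rangle \in S$ of length $m$, a one-point extension $s^\frown\langle Q\rangle$ lies in $S$ iff $Q \in K_m$, $Q_{m-1} \prec Q$, and for each $j \le m$ with $t_j := s\restriction j \in \Xi$ we have $s^\frown\langle Q\rangle \in T^{t_j}$; the last condition says $Q \in \Suc_{T^{t_j}}(s)$ — but note $s$ itself, as a node of $T^{t_j}$, is an extension of the trunk $t_j$, so this set is in $U_m$ by the $\bar U$-tree property of $T^{t_j}$. There are only finitely many $j \le m$, and $\{Q \in K_m : Q_{m-1}\prec Q\}$ contains a $U_m$-large set (indeed $K_m \cap \{Q : \kappa_Q > \lambda_{Q_{m-1}}\}$ is in $U_m$ since the projected measures concentrate on sets with $\kappa_Q$ arbitrarily large below $\kappa$, using that $\lambda_{Q_{m-1}} < \kappa$). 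So $\Suc_S(s)$ is a finite intersection of $U_m$-large sets, hence in $U_m$ by $\kappa$-completeness of $U_m$. One must also check the root $\langle\rangle \in S$ and that $\langle\langle\rangle, S\rangle \in \MPB$, which is immediate once $\Suc_S(\langle\rangle) \in U_0$.

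I expect the main obstacle to be a bookkeeping subtlety rather than a deep one: I must be careful that when $t \in S$ has length $m$, \emph{every} node of $S$ extending $t$ does lie in $T^t$ — this is exactly what the definition arranges, since any such node $u \supseteq t$ has $t$ as an initial segment, so the defining clause for $u \in S$ applied with that initial segment gives $u \in T^t$. A secondary point worth stating carefully is that $S_t$ as defined in Definition (for $T_s$) includes nodes below $t$ as well; but $\langle t, S_t\rangle \le \langle t, T^t\rangle$ only requires $S_t \subseteq (T^t)_t$ on the part at and above $t$, together with agreement of trunks and the successor-membership clauses of the order $\le$, all of which hold by construction. So the proof is essentially: define $S$ by simultaneous membership in all $T^t$ for initial segments $t$, then check the successor sets are large by finite intersection and $\kappa$-completeness.
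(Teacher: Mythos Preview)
Your proposal is correct and follows essentially the same approach as the paper. The paper defines $S$ inductively by setting $\Suc_S(t)=\bigcap_{i\le \len(t)}\Suc_{T^{t\upharpoonright i}}(t)$ for each $t\in S$, which is exactly the level-by-level version of your global definition ``$s\in S$ iff $s\in T^{s\upharpoonright j}$ for every $j\le \len(s)$''; the verification that $S_t\subseteq T^t$ and that successor sets are $U_m$-large by finite intersection is the same in both.
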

\begin{proof}
Define the tree $S$ by induction on levels so that for each $t \in S,$
\begin{center}
$\Suc_S(t)=\bigcap_{i \leq \len(t)} \Suc_{T^{t \upharpoonright i}}(t) \in K_{\len(t)}$.
\end{center}
We show that $S$ is as required. Thus let $t\in S.$ We need to show that $\langle t, S_t\rangle \leq \langle t, T^t \rangle$, i.e., $S_t \subseteq T^t.$
Thus assume $t \unlhd s \in S.$ Then
\[
s \in \Suc_S(s \upharpoonright \len(s)-1) \subseteq \Suc_{T^t}(s \upharpoonright \len(s)-1),
\]
so $s \in T^t.$
\end{proof}

\begin{lemma}
Assume $A \in V[G]$ is a set of ordinals of order type $\beta,$ where $\omega < \beta = cf^V(\beta) < \kappa.$
Then there exists an unbounded $B \subseteq A$ with $B \in V.$
\end{lemma}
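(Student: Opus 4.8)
The plan is to imitate the classical Prikry‑type ``capturing'' argument for the name $\dot A$, decide enough of $\dot A$ along the generic Prikry sequence, and then win by a pigeonhole on the \emph{level} of the generic at which each value gets decided; the regularity and uncountability of $\beta$ are exactly what make that pigeonhole work. Fix a $\MPB$‑name $\dot A$ for $A$ and a condition $p_0\in G$ forcing that $\dot A$ is a set of ordinals of order type $\check\beta$ with $\omega<\check\beta=\cf^V(\check\beta)<\kappa$; let $\dot e$ be a name that $p_0$ forces to be the increasing enumeration of $\dot A$, so for $\xi<\beta$ the object $\dot e(\xi)$ is a name for an ordinal.

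The key step is: \textbf{for each $\xi<\beta$ there is $q_\xi\leq^* p_0$ with the same lower part $s_0:=\langle P_0^{p_0},\dots,P_{\len(p_0)-1}^{p_0}\rangle$ such that the set of nodes $t\unrhd s_0$ of $T^{q_\xi}$ for which $\langle t,(T^{q_\xi})_t\rangle$ decides the value of $\dot e(\xi)$ meets every branch of $T^{q_\xi}$.} To obtain $q_\xi$: for each $t\in\Xi$ let $T^t$ be a direct extension of $\langle t,(T^{p_0})_t\rangle$ deciding $\dot e(\xi)$ if one exists, and otherwise an arbitrary $\bar U$‑tree with trunk $t$ (chosen so that $s_0$ is a node of the tree produced next); apply the Diagonal intersection lemma to $\{T^t:t\in\Xi\}$ to get $S$ with $\langle t,S_t\rangle\leq\langle t,T^t\rangle$ for all $t\in S$, and set $q_\xi:=\langle s_0,S_{s_0}\rangle$. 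That the ``deciding'' nodes meet every branch is a genericity argument: given a branch $b$ of $T^{q_\xi}$ with $b\restriction\len(q_\xi)=s_0$, pick a generic $G'\ni q_\xi$ whose Prikry sequence is $b$; some $r\leq q_\xi$ in $G'$ decides $\dot e(\xi)$, and for $m:=\len(r)$ one has $r\leq^*\langle b\restriction m,(T^{q_\xi})_{b\restriction m}\rangle$, so this latter condition has a direct extension deciding $\dot e(\xi)$, which by the choice of $T^{b\restriction m}$ forces $\langle b\restriction m,(T^{q_\xi})_{b\restriction m}\rangle$ itself to decide $\dot e(\xi)$, i.e.\ $b\restriction m$ is a deciding node.

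Next, since $(\MPB,\leq^*)$ is $\kappa$‑closed and $\beta<\kappa$, iterate the key step along a $\leq^*$‑decreasing chain $\langle q_\xi:\xi<\beta\rangle$ (at stage $\xi$ applying it below a $\leq^*$‑lower bound of the earlier $q_\eta$'s) and take $q\leq^* p_0$ below the whole chain; then for every $\xi<\beta$ the deciding nodes of $T^{q_\xi}$ lying in $T^q$ still decide $\dot e(\xi)$ and still meet every branch of $T^q$. Running the same construction below an arbitrary $p'\leq p_0$ shows conditions $q$ with this property are dense below $p_0$, so fix such a $q\in G$. Let $\langle P_i:i<\omega\rangle$ be the generic Prikry sequence, $t_m:=\langle P_0,\dots,P_{m-1}\rangle$; for $\xi<\beta$ let $m_\xi\geq\len(q)$ be least with $\langle t_{m_\xi},(T^q)_{t_{m_\xi}}\rangle$ deciding $\dot e(\xi)$ — finite, by the density just established — and note this condition lies in $G$, so it forces $\dot e(\xi)$ to equal a ground‑model ordinal $a_\xi$ with $a_\xi=\dot e(\xi)[G]$.

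Finally, $\xi\mapsto m_\xi$ lies in $V[G]$ and maps $\beta$ into $\omega$; since $\beta<\kappa$ is regular in $V$ and (by the Theorem above) $\MPB$ adds no bounded subsets of $\kappa$, $\beta$ is still regular, in particular of uncountable cofinality, in $V[G]$, so for some $m^*<\omega$ the set $Z:=\{\xi<\beta:m_\xi=m^*\}$ is unbounded in $\beta$ and lies in $V[G]$. Put $t^*:=t_{m^*}$ (a finite sequence with entries in $V$, hence $t^*\in V$, though selected via $G$), and, in $V$, set
\[
B:=\{\gamma:\exists\,\xi<\beta\ \ \langle t^*,(T^q)_{t^*}\rangle\Vdash\dot e(\xi)=\check\gamma\}.
\]
Then $B\in V$; since $\langle t^*,(T^q)_{t^*}\rangle\in G$ and forces every such $\gamma$ to lie in $\dot A$, we get $B\subseteq A$; and $B\supseteq\{a_\xi:\xi\in Z\}$, which is cofinal in $A$ because $Z$ is unbounded in $\beta$ and $\dot e$ is increasing, so $B$ is an unbounded subset of $A$ lying in $V$. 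The main obstacle is the key step together with its persistence under the $\leq^*$‑amalgamation: this is a strengthening of the Prikry property proved above and relies essentially on the Diagonal intersection lemma. The concluding pigeonhole is where the hypotheses $\omega<\beta=\cf^V(\beta)$ are used — for $\beta=\omega$ the statement fails, as witnessed by $\langle P_i\cap\kappa:i<\omega\rangle$ — and the verification that $B\subseteq A$ is routine.
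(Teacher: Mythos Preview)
Your argument is correct in substance and shares the paper's core idea (pigeonhole on the level at which values get decided, using $\cf(\beta)>\omega$), but you work considerably harder than necessary. The paper's proof is much shorter: it first observes $A=\bigcup_{p\in G}A_p$ with $A_p=\{\alpha:p\Vdash\check\alpha\in\dot A\}$, pigeonholes immediately on $\len(p)$ to find $n$ with $A'=\bigcup_{p\in G,\,\len(p)=n}A_p$ unbounded in $A$, and then --- since every condition in $G$ of length $n$ has the \emph{same} lower part $\langle P_0,\dots,P_{n-1}\rangle$ --- simply intersects the $\beta<\kappa$ many trees level by level via $\kappa$-completeness of the $U_m$'s to obtain a single condition deciding the whole enumeration of $A'$. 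No diagonal intersection lemma, no preliminary density construction, no iterated $\leq^*$-descent.

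One point in your write-up deserves tightening. In the ``key step'' you claim the deciding nodes meet \emph{every} branch of $T^{q_\xi}$, and your justification picks a generic $G'$ whose Prikry sequence is an arbitrary branch $b$; but an arbitrary branch of a $\bar U$-tree need not be realized by any generic filter, so this step is not justified as written. Fortunately the over-claim is harmless: your construction via the Prikry property and the diagonal intersection lemma actually yields the property ``for every $t\unrhd s_0$ in $T^{q_\xi}$, if some direct extension of $\langle t,(T^{p_0})_t\rangle$ decides $\dot e(\xi)$ then already $\langle t,(T^{q_\xi})_t\rangle$ does'', and this (inherited by $q$) is exactly what the final pigeonhole needs along the \emph{actual} generic branch --- there one finds $r\in G$ deciding $\dot e(\xi)$, and $r\leq^*\langle t_{\len(r)},(T^q)_{t_{\len(r)}}\rangle$ triggers the property. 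So rephrase the key step with this weaker conclusion and the proof goes through; but note that the paper's argument avoids the issue entirely.
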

\begin{proof}
For each $p\in G$ set $A_p= \{\alpha: p \Vdash~ \alpha \in \dot{A}\}.$ Then $A= \bigcup_{p\in G}A_p.$ Note that
in $V[G]$, $cf(\beta)=\beta > \omega,$ so for some $n< \omega,$ the set $A'=\bigcup_{p\in G, \len(p)=n}A_p$
is an unbounded subset of $A$.

Let $f\in V[G], f:\beta \rightarrow A'$ enumerate $A'$. For each $\alpha< \beta$ let $p_\alpha=\langle  P_0, \dots, P_{n-1}, T^\alpha     \rangle \in \MPB$
be such that $p_\alpha$ decides $\dot{f}(\alpha),$ where $\langle P_0, \dots, P_i, \dots  \rangle$
is the generic Prikry sequence. Let $p$ be  such that the lower part of $p$ is $\langle P_0, \dots, P_{n-1}   \rangle$ and for each $\langle P_0, \dots, P_{n-1}   \rangle \unlhd t\in T^p,$
 $\Suc_{T^p}(t)= \bigcap_{\alpha < \beta}\Suc_{T^\alpha}(t).$

Then $p\in \MPB$ and $p$ decides $\dot{f}.$ The result follows immediately.
\end{proof}
\begin{lemma}
(Bounding lemma) Assume $\forall n<\omega, \kappa_n=\kappa^{+n}$ (recall $\langle \kappa_n: n<\omega  \rangle$ is the sequence we fixed at the beginning). Let $\eta: \omega \rightarrow \kappa$ be such that $\eta(n)>n$ is a successor ordinal. Let
$\langle P_i: i<\omega   \rangle$
be the Prikry generic sequence,  and let $h \in V[\langle P_i: i<\omega   \rangle]$ with $h \in \prod_{i<\omega} \kappa_{P_i}^{+\eta(i)}$.
Then there exists $\langle H_i: i < \omega \rangle \in V$, so that:
\begin{enumerate}
\item For each $i$,
$\dom(H_i)= K_{i},$

\item  For all  $Q \in \dom(H_i)$, $H_i(Q) < \kappa_Q^{+\eta(i)}$,

\item For all large $i$,
$h(i) < H_{i}(P_i)$.
\end{enumerate}
\end{lemma}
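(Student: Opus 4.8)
The plan is to use the Prikry property together with the diagonal intersection lemma to "read off" a ground-model bound for $h$. First I would fix a name $\dot{h}$ for $h$ and, working in $V$, define for each lower part $t \in \Xi$ with $\len(t) = i+1$ a candidate value as follows. Given $t$ and a $\bar{U}$-tree $T$ with trunk $t$, the statement ``$\dot{h}(i) < \xi$'' for varying $\xi$ is decided along some direct extension by the Prikry property applied below $\langle t, T^{t}\rangle$ (here $T^t$ is a fixed tree witnessing $\langle t, T^t\rangle \in \MPB$, e.g. the full tree). More precisely, since $\langle \MPB, \leq, \leq^*\rangle$ has the Prikry property and $\langle \MPB, \leq^*\rangle$ is $\kappa$-closed, and since $\dot{h}(i)$ is forced to be an ordinal below $\kappa_{P_i}^{+\eta(i)} < \kappa$, a standard fusion/closure argument gives a single direct extension $\langle t, S^t\rangle \leq^* \langle t, T^t\rangle$ deciding the value of $\dot{h}(i)$; call the decided value $\gamma_t < \kappa_{P_{i}^{t}}^{+\eta(i)}$, where $P_i^t$ is the last entry of $t$.

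Next I would apply the diagonal intersection lemma to the family $\{S^t : t \in \Xi\}$ to obtain a single $\bar U$-tree $R$ with trunk $\langle\rangle$ such that $\langle t, R_t\rangle \leq \langle t, S^t\rangle$ for every $t \in R$. Then I define $H_i : K_i \to \On$ by
\[
H_i(Q) = \sup\{\, \gamma_t + 1 : t \in \Xi,\ \len(t) = i+1,\ P_i^t = Q \,\}.
\]
Since every $\gamma_t$ with $P_i^t = Q$ satisfies $\gamma_t < \kappa_Q^{+\eta(i)}$ and $\kappa_Q^{+\eta(i)}$ is a regular cardinal of size at least $|K_i| = |P_\kappa(\kappa_i)| \le \kappa_i \le \kappa_Q^{+\eta(i)}$ — this is where the hypothesis $\kappa_n = \kappa^{+n}$ and $\eta(i)$ a successor ordinal $> i$ is used, to guarantee $\kappa_Q^{+\eta(i)}$ has cofinality larger than the number of relevant $t$'s — the supremum stays below $\kappa_Q^{+\eta(i)}$, so $H_i(Q) < \kappa_Q^{+\eta(i)}$. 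Thus $\langle H_i : i<\omega\rangle \in V$ satisfies (1) and (2).

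For (3): let $p = \langle P_0^p,\dots,P_{n-1}^p, R_{\langle P_0^p,\dots,P_{n-1}^p\rangle}\rangle$ be any condition in $G$ whose tree part is (a restriction of) $R$; by genericity and the diagonal intersection lemma such $p$ exists in $G$. For each $i \ge n$, the initial segment $t_i = \langle P_0,\dots,P_{i}\rangle$ of the generic sequence lies in $R$, and since $\langle t_i, R_{t_i}\rangle \le \langle t_i, S^{t_i}\rangle$ decides $\dot{h}(i)$ to be $\gamma_{t_i}$, and this condition is in $G$, we get $h(i) = \gamma_{t_i} < \gamma_{t_i}+1 \le H_i(P_i^{t_i}) = H_i(P_i)$. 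Hence (3) holds for all $i \ge n$, i.e. for all large $i$. The main obstacle I anticipate is the cardinal-arithmetic bookkeeping in the definition of $H_i$: one must check that the relevant supremum is genuinely taken over a set of size strictly less than $\cf(\kappa_Q^{+\eta(i)})$, which forces the use of $\eta(i) > i$ and the $\kappa_n = \kappa^{+n}$ assumption — without these the supremum could reach $\kappa_Q^{+\eta(i)}$ and clause (2) would fail.
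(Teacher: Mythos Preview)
Your overall strategy matches the paper's exactly: use the Prikry property (together with $\kappa$-closure of $\leq^*$) to find, for each lower part $t$, a direct extension deciding $\dot h(\len(t)-1)$; diagonally intersect the resulting trees; and define $H_i(Q)$ as a supremum over all lower parts of length $i+1$ ending in $Q$. Your verification of clause (3) via a condition in $G$ is in fact more carefully spelled out than the paper's.

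There is, however, a genuine gap in your cardinal arithmetic for clause~(2). You write that $\kappa_Q^{+\eta(i)}$ has size at least $|K_i|=|P_\kappa(\kappa_i)|\le\kappa_i\le\kappa_Q^{+\eta(i)}$. The last inequality is simply false: $\kappa_Q=Q\cap\kappa<\kappa$, and since $\kappa$ is inaccessible and $\eta(i)<\kappa$, we have $\kappa_Q^{+\eta(i)}<\kappa\le\kappa_i=\kappa^{+i}$. Moreover, $|K_i|$ is not even the relevant quantity: what you need to bound is the number of tuples $\langle P_0,\dots,P_{i-1}\rangle$ with each $P_j\in K_j$ and $P_0\prec\cdots\prec P_{i-1}\prec Q$, not the number of choices for the last coordinate.

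The paper's proof asserts (with the phrase ``by a simple counting argument'') the sharper bound $H_i(Q)\le\kappa_Q^{+i}$, and it is precisely this that makes the hypothesis $\eta(i)>i$ relevant. The point you should exploit is the relation $\prec$: if $P_{j}\prec Q$ then $\operatorname{otp}(P_j)<\kappa_Q$, so each $P_j$ occurring in such a tuple has size $<\kappa_Q$; the counting has to go through this constraint rather than through the global size of $K_i$ or $P_\kappa(\kappa_i)$. You should rework the estimate along these lines.
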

\begin{proof}
Assume for simplicity that the trivial condition forces $\dot{h}$ is as in the statement of the lemma.
For any $t\in \Xi,$  by the Prikry property, let $q_t = \langle t, H^t \rangle \in \MPB$ be such that $q_t$ decides $\dot{h}(\len(t)-1)$, say
 $q_t \Vdash~ \dot{h}(\len(t)-1)= g(t) < \kappa_{P^t_{\len(t)-1}}^{+\eta(\len(t)-1)}.$

By diagonal intersection lemma, we can find a tree $S$ so that for each $t\in S, \langle t, S_t \rangle \leq q_t.$ Let
$p=\langle  \langle \rangle, S \rangle.$ Then for any $i<\omega,$
 \[
 p \Vdash~ \dot{h}(i)=g(\langle  P_0, \dots, P_i     \rangle).
 \]

For any $i<\omega$
let $\dom(H_i)= K_{i},$ and for $Q \in K_i$ set
\[
H_i(Q) = \sup\{g(t):   t \in \Xi, \len(t)=i+1, P^t_{i}=Q    \}+1.
\]
By a simple counting argument, $H_i(Q) \leq \kappa_Q^{+i} < \kappa_Q^{+\eta(i)}$.

\end{proof}

School of Mathematics, Institute for Research in Fundamental Sciences (IPM), P.O. Box:
19395-5746, Tehran-Iran.

E-mail address: golshani.m@gmail.com

\end{document}